\pdfoutput=1
\documentclass[letterpaper]{amsart}
\usepackage[style=alphabetic]{biblatex}
\addbibresource{bibliography.bib}
\usepackage{xurl}
\usepackage{hyperref,amssymb,amsthm,xcolor}
\hypersetup{colorlinks=true,citecolor=red}
\usepackage[nameinlink]{cleveref}

\oddsidemargin=10mm
\evensidemargin=10mm
\topmargin=5mm
\textwidth=150mm
\textheight=200mm
\calclayout

\theoremstyle{plain}
\newtheorem{theorem}{Theorem}[section]
\newtheorem{lemma}[theorem]{Lemma}
\newtheorem{corollary}[theorem]{Corollary}
\newtheorem{proposition}[theorem]{Proposition}

\theoremstyle{definition}
\newtheorem{remark}[theorem]{Remark}
\newtheorem{definition}[theorem]{Definition}

\DeclareMathOperator{\Nil}{Nil}
\newcommand{\red}{\mathsf{red}}
\newcommand{\zero}{\mathsf{zero}}
\newcommand{\add}{\mathsf{add}}
\newcommand{\mult}{\mathsf{mult}}
\newcommand{\intro}{\mathsf{intro}}
\newcommand{\semiprime}{\mathsf{semiprime}}

\begin{document}
\title{A constructive counterpart of the subdirect representation theorem for reduced rings}
\author{Ryota Kuroki}
\thanks{Graduate School of Mathematical Sciences, University of Tokyo.\\\indent
\emph{E-mail address:} {\ttfamily{kuroki-ryota128@g.ecc.u-tokyo.ac.jp}}}

\begin{abstract}
We give a constructive counterpart of the theorem of Andrunakievi\v c and Rjabuhin, which states that every reduced ring is a subdirect product of domains.
As an application, we extract a constructive proof of the fact that every ring $A$ satisfying $\forall x\in A.\ x^3=x$ is commutative from a classical proof.
We also prove a similar result for semiprime ideals.
\end{abstract}

\maketitle

\section{Introduction}
In this paper, all rings are unital.
It is well known that every reduced ring can be represented as a subdirect product of domains (\cite{andrunakievich1968}, \cite{Klein}, \cite[Theorem 12.7]{Lam}).
In the commutative case, a constructive counterpart of this theorem is known as formal Nullstellensatz (\cite[Lemma V-3.2]{StoneSpaces}, \cite[Section 4]{SW2021}).
The main theorem of this paper (\cref{subdirect-representation-constructive}) generalizes it to the noncommutative case. We also prove a similar result for semiprime ideals in \cref{semiprime}.

As an application, we use \cref{subdirect-representation-constructive} to extract a constructive proof of the fact that every ring $A$ satisfying $\forall x\in A.\ x^3=x$ is commutative from a classical proof. See \cite[Section 5.7]{coquand} for another method of extraction using topological models.
\section{A constructive counterpart of the subdirect representation theorem for reduced rings}
\begin{definition}
    A ring $A$ is called reduced if $x^2=0$ implies $x=0$ for all $x\in A$. An ideal $I$ of $A$ is called reduced or completely semiprime if $A/I$ is reduced.
\end{definition}
\begin{lemma}[{\cite[Lemma 1.2]{Krempa}}]\label{permutation-lemma}
    Let $I$ be a reduced ideal of $A$, $x_1,\ldots,x_n\in A$, and $\sigma\in S_n$. If $x_1\cdots x_n\in I$, then $x_{\sigma(1)}\cdots x_{\sigma(n)}\in I$.
\end{lemma}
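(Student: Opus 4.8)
The plan is to reduce to swapping two adjacent factors and to isolate a single nontrivial ``transposition lemma.'' Recall that $I$ being reduced means precisely that $x^2\in I$ implies $x\in I$. First I would record two elementary consequences. From $xy\in I$ the identity $(yx)^2=y(xy)x$ shows $(yx)^2\in I$, so $yx\in I$; thus membership of a product in $I$ is invariant under cyclic rotation. Similarly, given $xy\in I$ together with the just-proved $yx\in I$, the identity $(xry)^2=xr(yx)ry$ gives $(xry)^2\in I$, hence $xry\in I$ for every $r\in A$; so one may freely insert a factor between two blocks whose product lies in $I$. Since $S_n$ is generated by adjacent transpositions, it suffices to prove that swapping two neighbouring factors preserves membership in $I$, and by the cyclic-rotation property I may assume the two factors to be swapped occupy the last two positions of a grouping into three blocks.

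The heart of the argument is then the claim that $PQR\in I$ implies $PRQ\in I$ for $P,Q,R\in A$. The plan is to manufacture $(PRQ)^2$ out of the hypothesis: inserting $R$ between $P$ and $QR$ gives $PRQR\in I$; inserting $P$ between $PRQ$ and $R$ gives $PRQPR\in I$; and multiplying on the right by $Q$ (legitimate since $I$ is an ideal) yields $PRQPRQ=(PRQ)^2\in I$, whence $PRQ\in I$ because $I$ is reduced. I expect this to be the main obstacle, and the subtle point worth flagging is this: one cannot simply square $PRQ$ and hope to spot a rotation of $PQR$ inside it, since every contiguous three-block window of $(PRQ)^2=PRQPRQ$ is again a rotation of $PRQ$, never of $PQR$. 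The two cyclic orbits are bridged only by \emph{building} the square from the hypothesis through the two insertions above.

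Finally I would assemble the general statement. Given $x_1\cdots x_n\in I$ and an adjacent transposition of $x_i,x_{i+1}$, set $a=x_1\cdots x_{i-1}$ and $b=x_{i+2}\cdots x_n$ (empty products read as $1$, as $A$ is unital). Cyclic rotation turns $a\,x_i x_{i+1}\,b\in I$ into $b\,a\,x_i x_{i+1}\in I$; the transposition lemma applied to the three blocks $(ba,\,x_i,\,x_{i+1})$ gives $b\,a\,x_{i+1} x_i\in I$; and one further rotation returns $a\,x_{i+1} x_i\,b\in I$. As adjacent transpositions generate $S_n$, iterating this along any factorization of $\sigma$ yields $x_{\sigma(1)}\cdots x_{\sigma(n)}\in I$ for every $\sigma\in S_n$. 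Every step is an explicit manipulation, so the resulting argument is constructive.
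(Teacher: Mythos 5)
The paper does not actually prove this lemma; it imports it verbatim from Krempa, so there is no in-text proof to compare against. Your argument is correct and self-contained, and it follows the standard route. The two preliminary facts hold as you state: from $xy\in I$ the identity $(yx)^2=y(xy)x$ gives $yx\in I$, and then $(xry)^2=xr(yx)ry$ gives $xry\in I$ for every $r$. The three-block claim is indeed the only nontrivial step, and your chain checks out: insertion with $x=P$, $y=QR$, $r=R$ turns $PQR\in I$ into $PRQR\in I$; insertion with $x=PRQ$, $y=R$, $r=P$ turns that into $PRQPR\in I$; right-multiplying by $Q$ gives $(PRQ)^2\in I$, hence $PRQ\in I$ since $I$ is reduced. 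Your remark that one cannot simply square $PRQ$ and look for a rotation of $PQR$ inside is exactly the right thing to flag --- the two cyclic classes really are bridged only by building the square from the hypothesis. The reduction to adjacent transpositions, conjugating by cyclic rotations so that the swapped pair sits at the end, is also sound (the empty-block convention is harmless since the paper assumes rings are unital), and every step is explicit, so the lemma remains available constructively, which is what the paper needs in the induction for \cref{subdirect-representation-constructive}.
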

\begin{definition}
    For a subset $U\subseteq A$, we generate an ideal $\Nil U\subseteq A$ by the following constructors (we generate $(-)\in\Nil U$ as an inductive family):
    \begin{alignat*}{2}
        \intro_x&{}:{}&x\in U&\implies x\in\Nil U,\\
        \zero&{}:{}&&\phantom{{}\implies{}}0\in\Nil U,\\
        \add_{x,y}&{}:{}&x,y\in\Nil U&\implies x+y\in\Nil U,\\
        \mult_{z,x,w}&{}:{}&x\in\Nil U&\implies zxw\in\Nil U,\\
        \red_x&{}:{}&x^2\in\Nil U&\implies x\in\Nil U.
    \end{alignat*}
\end{definition}
The ideal $\Nil U\subseteq A$ is the smallest reduced ideal containing $U$. A ring $A$ is reduced if and only if $\Nil0=0$.
\begin{theorem}
    \label{subdirect-representation-constructive}
    Let $U$ be a subset of a ring $A$ and $a,b,x,y\in A$. If $x\in\Nil(U,a)$ and $y\in\Nil(U,b)$, then $xy\in\Nil(U,ab)$. Here $\Nil(U,a)$ denotes $\Nil(U\cup\{a\})$.
\end{theorem}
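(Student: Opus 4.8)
The plan is to prove the statement by a nested induction on the inductive structure of the witnesses $x\in\Nil(U,a)$ and $y\in\Nil(U,b)$, writing $I:=\Nil(U,ab)$ throughout. Since $I$ is a reduced ideal, \cref{permutation-lemma} lets me freely permute the factors of any product already known to lie in $I$, and this permutation principle is what makes the argument go through. First I would prove the special case as a lemma: for every $y\in\Nil(U,b)$ one has $ay\in I$, by induction on $y$. Then I would prove the full statement $\forall y\in\Nil(U,b).\ xy\in I$ by induction on $x\in\Nil(U,a)$, where the only genuinely new base case, $x=a$ (arising from $\intro_a$), is discharged precisely by this lemma.

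Most cases are routine once one observes two absorption tricks. The $\zero$ and $\add$ constructors are handled by $0\cdot y=0$ and distributivity. For the $\intro$ case with $x\in U$ (or $y\in U$ in the lemma) one notes $x\in I$ directly, whence $xy\in I$ by $\mult$; and for $y=b$ in the lemma, $ab\in I$ holds by $\intro$. The $\mult$ case is where the first trick appears: if $x=zx'w$, then since $wy\in\Nil(U,b)$ (again by $\mult$) the induction hypothesis applied to $x'$ gives $x'(wy)\in I$, and left-multiplying by $z$ yields $xy=zx'wy\in I$; in the lemma's $\mult$ case one instead multiplies the hypothesis $ay'\in I$ on the left by $z$ and on the right by $w$, then permutes $zay'w$ into $azy'w=ay$ using \cref{permutation-lemma}.

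The main obstacle is the $\red$ constructor, because $xy$ is not a square, so one cannot simply feed the induction hypothesis into $\red$. The resolution is to build $(xy)^2$ indirectly: given $x^2\in\Nil(U,a)$, the induction hypothesis applied to $y^2\in\Nil(U,b)$ yields $x^2y^2\in I$, and since the two words $xxyy$ and $xyxy$ are permutations of one another, \cref{permutation-lemma} gives $(xy)^2=xyxy\in I$, so $xy\in I$ by $\red$. The lemma's $\red$ case is analogous, deriving $a^2y^2\in I$ and permuting $aayy$ into $(ay)^2$. Assembling these cases completes both inductions and hence the theorem.
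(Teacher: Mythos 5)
Your proof is correct and follows essentially the same route as the paper's: structural induction on the witnesses of $x\in\Nil(U,a)$ and $y\in\Nil(U,b)$, with \cref{permutation-lemma} carrying the two nontrivial cases (your passage from $x^2y^2\in\Nil(U,ab)$ to $(xy)^2=xyxy\in\Nil(U,ab)$ for the $\red$ constructor is exactly the paper's idea). Your explicit nesting---first the auxiliary claim $ay\in\Nil(U,ab)$ by induction on $y$, then induction on $x$ quantified over all $y$---is just a tidier bookkeeping of the paper's simultaneous induction, whose symmetric ``remaining cases'' it makes precise.
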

\begin{proof}
    The proof is by induction on the witnesses $p,q$ of $x\in\Nil(U,a)$, $y\in\Nil(U,b)$.
    \begin{enumerate}
        \item If $p$ and $q$ are the form $\intro_x({-})$ and $\intro_y({-})$ respectively, then $x\in U\cup\{a\}$ and $y\in U\cup\{b\}$. So $xy\in\Nil(U,ab)$.
        \item If $p$ is $\zero$, then $xy=0\in\Nil(U,ab)$.
        \item If $p$ is of the form $\add_{x_1,x_2}({-},{-})$, then we have $x=x_1+x_2$ and $x_1y,x_2y\in\Nil(U,ab)$ by the inductive hypothesis. So $xy=x_1y+x_2y\in\Nil(U,ab)$.
        \item If $p$ is of the form $\mult_{z,x',w}({-})$, then we have $x=zx'w$ and $x'y\in\Nil(U,ab)$ by the inductive hypothesis. So $xy=zx'wy$ is in $\Nil(U,ab)$ by \cref{permutation-lemma}.
        \item If $p$ is of the form $\red_{x}({-})$, then we have $x^2y\in\Nil(U,ab)$ by the inductive hypothesis. So $(xy)^2$ and $xy$ are in $\Nil(U,ab)$ by \cref{permutation-lemma}.
    \end{enumerate}
    The remaining cases can be dealt similarly.
\end{proof}
\begin{corollary}
    \label{corollary-subdirect-representation}
    If $U$ is a subset of a ring $A$ and $a,b\in A$, then $\Nil(U,a)\cap\Nil(U,b)\subseteq\Nil(U,ab)$.
\end{corollary}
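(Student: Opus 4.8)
The plan is to deduce this directly from \cref{subdirect-representation-constructive} by instantiating the theorem along the diagonal. First I would take an arbitrary element $z\in\Nil(U,a)\cap\Nil(U,b)$, which by definition of the intersection means $z\in\Nil(U,a)$ and $z\in\Nil(U,b)$ simultaneously. Applying the theorem with the choices $x=z$ and $y=z$, the two hypotheses $z\in\Nil(U,a)$ and $z\in\Nil(U,b)$ are exactly what is required, and the conclusion yields $z^2=zz\in\Nil(U,ab)$.

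The remaining step is to pass from $z^2$ to $z$. Here I would invoke the reduction constructor built into the inductive definition of $\Nil$: since $z^2\in\Nil(U,ab)$, the constructor $\red_z$ applies and gives $z\in\Nil(U,ab)$. This is precisely the point at which the definition of $\Nil(U,ab)$ as the smallest \emph{reduced} ideal containing $U\cup\{ab\}$ is used. As $z$ was an arbitrary element of the intersection, this establishes the claimed inclusion $\Nil(U,a)\cap\Nil(U,b)\subseteq\Nil(U,ab)$.

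I do not anticipate any real obstacle in this argument, since the corollary is essentially a specialization of the theorem. The only substantive point worth highlighting is that the proof genuinely combines both ingredients of the inductive construction: the multiplicativity furnished by \cref{subdirect-representation-constructive} alone produces only $z^2\in\Nil(U,ab)$, and it is the closure of $\Nil(U,ab)$ under $\red$ that upgrades this to membership of $z$ itself. Constructively, no case distinction on whether $z$ lies in one piece or the other is needed, which is the reason this diagonal instantiation is available even in the intuitionistic setting.
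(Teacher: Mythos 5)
Your argument is correct and is precisely the intended derivation: the paper states the corollary without proof because it follows immediately from \cref{subdirect-representation-constructive} by the diagonal instantiation $x=y=z$ followed by one application of the $\red$ constructor, exactly as you describe. Your closing observation that both the multiplicativity of the theorem and the closure of $\Nil(U,ab)$ under $\red$ are genuinely needed is accurate and well placed.
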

\begin{remark}
    The subdirect representation theorem for reduced rings (\cite{andrunakievich1968}) easily follows from \cref{subdirect-representation-constructive} in classical mathematics. Let $U$ be a subset of a ring $A$. We prove that $\Nil U$ is equal to the intersection $\bigcap_{P\supseteq U}P$ of all completely prime ideals containing $U$. We can prove $\Nil U\subseteq\bigcap_{P\supseteq U}P$ since $\Nil U\subseteq A$ is the smallest reduced ideal containing $U$. Conversely, let $x\in A-\Nil U$ and prove that there exists a completely prime ideal $P$ containing $U$ and not containing $x$.
    The set of reduced ideals containing $U$ and not containing $x$ has a maximal element $P$ by Zorn's lemma. Let $a,b\in A$. If $ab\in P$, then $x\notin\Nil(P,a)$ or $x\notin\Nil(P,b)$ by \cref{corollary-subdirect-representation}. So $a\in P$ or $b\in P$ by the maximality of $P$. So $P$ is completely prime.
\end{remark}
\begin{remark}
    The subdirect representation theorem for reduced rings (\cite{andrunakievich1968}) can be regarded as a semantic counterpart of a conservation theorem.
    We generate a single-conclusion entailment relation $\rhd$ and an entailment relation $\vdash$ on a ring $A$ by the following axioms:
    \begin{gather*}
        {}\rhd0,\quad a,b\rhd a+b,\quad a\rhd xay,\quad a^2\rhd a.\\
        {}\vdash0,\quad a,b\vdash a+b,\quad a\vdash xay,\quad ab\vdash a,b,\quad 1\vdash{}.
    \end{gather*}
    If $U$ is a finite subset of $A$ and $a\in A$, then $U\rhd a$ is equivalent to $a\in\Nil U$.
    So $\vdash$ is a conservative extension of $\rhd$ by \cref{corollary-subdirect-representation} and Universal Krull (\cite[Corollary 3]{RSW}).
    It is possible to use \cite[Lemma 4.34]{Wes18} instead of Universal Krull.
    This implies the subdirect representation theorem by a completeness theorem for entailment relations (\cite[Proposition 1.4]{scott1974}).
\end{remark}
We give an application of \cref{corollary-subdirect-representation} to a constructive proof of a commutativity theorem.
\begin{proposition}\label{commutativity-example}
    If $A$ is a ring, $x\in A$, and $c_1,\ldots,c_n\in Z(A):=\{z\in A:\forall y\in A.\ [z,y]=0\}$, then $[x,y]:=xy-yx\in\Nil((x-c_1)\cdots(x-c_n))$ for all $y\in A$. In particular, if $x$ is an element of a reduced ring $A$ such that $x^3=x$, then $x\in Z(A)$.
\end{proposition}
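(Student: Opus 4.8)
The plan is to reduce the membership $[x,y]\in\Nil((x-c_1)\cdots(x-c_n))$ to two ingredients: first, that $[x,y]$ lies in $\Nil(x-c_k)$ for each single factor, and second, that \cref{corollary-subdirect-representation} lets me upgrade such ``factorwise'' membership to membership in the $\Nil$ of the product.

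First I would treat a single central element $c$. Since $c\in Z(A)$, the commutator telescopes as $[x,y]=xy-yx=(x-c)y-y(x-c)$, because the terms $cy$ and $yc$ cancel. As $\Nil(x-c)$ is an ideal containing $x-c$ (via $\intro$), it contains both $(x-c)y$ and $y(x-c)$ (via $\mult$, using also that it is closed under negation since $-t=(-1)t1$), hence it contains their difference $[x,y]$ (via $\add$). Thus $[x,y]\in\Nil(x-c)$ for every central $c$; in particular $[x,y]\in\Nil(x-c_k)$ for each $k$.

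Next I would run an induction on $k$ to show $[x,y]\in\Nil((x-c_1)\cdots(x-c_k))$. The base case $k=1$ is the previous paragraph. For the inductive step, writing $P_{k-1}=(x-c_1)\cdots(x-c_{k-1})$, the inductive hypothesis gives $[x,y]\in\Nil(P_{k-1})$ and the first paragraph gives $[x,y]\in\Nil(x-c_k)$, so that $[x,y]\in\Nil(P_{k-1})\cap\Nil(x-c_k)\subseteq\Nil(P_{k-1}(x-c_k))$ by \cref{corollary-subdirect-representation} applied with $U=\emptyset$, $a=P_{k-1}$, $b=x-c_k$. Since $P_{k-1}(x-c_k)=(x-c_1)\cdots(x-c_k)$, taking $k=n$ proves the main assertion. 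Finally, for the ``in particular'' clause, I would observe that $x^3=x$ forces $x(x-1)(x+1)=x^3-x=0$, and that $0,1,-1$ are central; taking $(c_1,c_2,c_3)=(0,1,-1)$ gives $[x,y]\in\Nil(x(x-1)(x+1))=\Nil(0)$ for all $y$, and since $A$ is reduced $\Nil(0)=0$, whence $[x,y]=0$ and $x\in Z(A)$.

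As for the main obstacle: the computation itself is short, so the only point needing care is the factorwise step, namely making sure the commutator really collapses to a difference of two elements of the principal reduced ideal $\Nil(x-c)$, which hinges on both the centrality of $c$ and the closure of $\Nil$ under negation. The genuine content is conceptual rather than technical: recognizing that one should prove membership in each $\Nil(x-c_k)$ \emph{separately} and only then invoke the corollary to recombine them into the product, which is the opposite direction from how one might first try to attack the product directly.
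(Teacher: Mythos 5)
Your proposal is correct and follows essentially the same route as the paper: showing $[x,y]=[x-c_k,y]\in\Nil(x-c_k)$ for each central $c_k$ and then combining these memberships via \cref{corollary-subdirect-representation} to land in $\Nil((x-c_1)\cdots(x-c_n))$. Your explicit induction on $k$ and your spelled-out treatment of the ``in particular'' clause (using $x^3-x=(x-0)(x-1)(x+1)$ and $\Nil(0)=0$ in a reduced ring) merely make precise what the paper leaves implicit.
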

\begin{proof}
    We have $\bigcap_{k=1}^n\Nil(x-c_k)\subseteq\Nil((x-c_1)\cdots(x-c_n))$ by \cref{corollary-subdirect-representation}. The assertion follows, since $[x,y]=[x-c_k,y]\in\Nil(x-c_k)$ for all $k\in\{1,\ldots,n\}$ and $y\in A$.
\end{proof}
This proposition implies that every ring $A$ such that $x^3=x$ for all $x\in A$ is commutative. See \cite{Buckley-MacHale} for other constructive proofs of this fact. Note that rings are not assumed to be unital in the paper, although it does not make much difference (\cite[Proposition 2.14]{brandenburg2023}).
\section{A similar result for semiprime ideals}\label{semiprime}
\begin{definition}
    A ring $A$ is called semiprime if $xAx=\{0\}$ implies $x=0$ for all $x\in A$. An ideal $I$ of $A$ is called semiprime if $A/I$ is semiprime.
\end{definition}
\begin{definition}
    For a subset $U\subseteq A$, we generate an ideal $\sqrt{U}\subseteq A$ by the following constructors:
    \begin{alignat*}{2}
        \intro_x&{}:{}&x\in U&\implies x\in\sqrt U,\\
        \zero&{}:{}&&\phantom{{}\implies{}}0\in\sqrt U,\\
        \add_{x,y}&{}:{}&x,y\in\sqrt U&\implies x+y\in\sqrt U,\\
        \mult_{z,x,w}&{}:{}&x\in\sqrt U&\implies zxw\in\sqrt U,\\
        \semiprime_x&{}:{}&(\forall z\in A.\ xzx\in\sqrt{U})&\implies x\in\sqrt U.
    \end{alignat*}
\end{definition}
The ideal $\sqrt{U}\subseteq A$ is the smallest semiprime ideal containing $U$. A ring $A$ is semiprime if and only if $\sqrt0=0$.
\begin{theorem}
    \label{subdirect-representation-constructive2}
    Let $U$ be a subset of a ring $A$ and $a,b,x,y\in A$. If $x\in\sqrt{U,a}$ and $y\in\sqrt{U,b}$, then $xAy\subseteq\sqrt{U,aAb}$. Here $\sqrt{U,a}$ and $\sqrt{U,aAb}$ denotes $\sqrt{U\cup\{a\}}$ and $\sqrt{U\cup aAb}$, respectively.
\end{theorem}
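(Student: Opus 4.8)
The plan is to mimic the proof of \cref{subdirect-representation-constructive}, with the role of \cref{permutation-lemma} replaced by the two-sided bookkeeping that is built into the statement. The key point to notice first is that the conclusion tracks an arbitrary middle factor: it asserts $xty\in\sqrt{U,aAb}$ for every $t\in A$. This extra degree of freedom is exactly what the semiprime constructor will consume, just as the bare product $xy$ sufficed in the reduced case. I would argue by induction on the witness $p$ of $x\in\sqrt{U,a}$, carrying the witness $q$ of $y\in\sqrt{U,b}$ along.

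The cases where $p$ is $\zero$, $\add$, or $\mult$ are routine and use only that $\sqrt{U,aAb}$ is an ideal, the point being that the surplus multiplications can always be absorbed into the middle factor. For instance, if $p=\mult_{z,x',w}$ so that $x=zx'w$, then $xty=z\bigl(x'(wt)y\bigr)$, and the inductive hypothesis applied to $x'$ with middle factor $wt$ gives $x'(wt)y\in\sqrt{U,aAb}$, whence $xty\in\sqrt{U,aAb}$ by a left multiplication. The base case $p=\intro_x$ splits into two: if $x\in U$ then $xty\in\sqrt{U,aAb}$ immediately, while if $x=a$ one runs a symmetric inner induction on the witness $q$ of $y\in\sqrt{U,b}$, the only genuinely new point being that when $y=b$ the element $atb$ lands in $aAb\subseteq\sqrt{U,aAb}$ by construction.

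The crux is the case $p=\semiprime_x$, whose premise provides, for every $z\in A$, a strictly smaller witness of $xzx\in\sqrt{U,a}$. By the inductive hypothesis, $(xzx)t'y\in\sqrt{U,aAb}$ for all $z,t'\in A$. To conclude $xty\in\sqrt{U,aAb}$ I would apply the constructor $\semiprime_{xty}$, which reduces the goal to showing $(xty)w(xty)\in\sqrt{U,aAb}$ for every $w\in A$. Here the decisive algebraic observation is the rewriting
\[(xty)w(xty)=\bigl(x(tyw)x\bigr)\,t\,y,\]
which exhibits the left-hand side as $(xzx)ty$ with $z:=tyw$, so that it lies in $\sqrt{U,aAb}$ by the inductive hypothesis. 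This is where the middle factor $t$ pays off, and it plays exactly the role that \cref{permutation-lemma} played in the reduced setting; the inner induction's $\semiprime_y$ case closes by the mirror-image identity $(aty)w(aty)=(at)\bigl(y(wat)y\bigr)$.

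I expect the main obstacle to be twofold: first, formulating the statement with the middle factor threaded through the conclusion, since this is precisely what makes the displayed identity close the semiprime case; and second, organizing the nested induction so that both the inner induction on $y$ in the base case and the two appeals to the $\semiprime$ constructor respect the well-foundedness of the witnesses. Once the conclusion is stated as $xAy\subseteq\sqrt{U,aAb}$ rather than a single product, the remaining manipulations are essentially forced.
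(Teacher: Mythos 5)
Your proposal is correct and follows essentially the same route as the paper: induction on the witnesses, with the $\semiprime$ case closed by applying the $\semiprime$ constructor to $xty$ via the identity $(xty)w(xty)=(x(tyw)x)ty$, which is exactly the paper's step ``$xwyAxwy\subseteq\sqrt{U,aAb}$ for all $w$.'' Your explicit treatment of the inner induction on $q$ and the mirror identity for the $\semiprime_y$ case merely spells out what the paper compresses into ``the remaining cases can be dealt similarly.''
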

\begin{proof}
    The proof is by induction on the witnesses $p,q$ of $x\in\sqrt{U,a}$, $y\in\sqrt{U,b}$.
    \begin{enumerate}
        \item If $p$ and $q$ are the form $\intro_x({-})$ and $\intro_y({-})$ respectively, then $x\in U\cup\{a\}$ and $y\in U\cup\{b\}$. So $xAy\subseteq\sqrt{U,aAb}$.
        \item If $p$ is $\zero$, then $xAy=\{0\}\subseteq\sqrt{U,aAb}$.
        \item If $p$ is of the form $\add_{x_1,x_2}({-},{-})$, then we have $x=x_1+x_2$ and $x_1Ay,x_2Ay\subseteq\sqrt{U,aAb}$ by the inductive hypothesis. So $xAy\subseteq x_1Ay+x_2Ay\subseteq\sqrt{U,aAb}$.
        \item If $p$ is of the form $\mult_{z,x',w}({-})$, then we have $x=zx'w$ and $x'Ay\subseteq\sqrt{U,aAb}$ by the inductive hypothesis. So $xAy=zx'wAy\subseteq\sqrt{U,aAb}$.
        \item If $p$ is of the form $\semiprime_{x}({-})$, then we have $xzxAy\subseteq\sqrt{U,aAb}$ for all $z\in A$ by the inductive hypothesis.
        So $xwyAxwy\subseteq\sqrt{U,aAb}$ for all $w\in A$. So $xAy\subseteq\sqrt{U,aAb}$.
    \end{enumerate}
    The remaining cases can be dealt similarly.
\end{proof}
\begin{corollary}
    If $U$ is a subset of a ring $A$ and $a,b\in A$, then $\sqrt{U,a}\cap\sqrt{U,b}\subseteq\sqrt{U,aAb}$.
\end{corollary}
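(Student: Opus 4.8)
The plan is to mimic the deduction of \cref{corollary-subdirect-representation} from \cref{subdirect-representation-constructive}, with the reduction constructor $\red$ replaced by $\semiprime$. Concretely, I would fix an arbitrary element $x\in\sqrt{U,a}\cap\sqrt{U,b}$ and aim to show $x\in\sqrt{U,aAb}$, which establishes the claimed inclusion.

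First I would apply \cref{subdirect-representation-constructive2} with the diagonal choice $y:=x$. Since $x\in\sqrt{U,a}$ and $x\in\sqrt{U,b}$, the theorem immediately yields $xAx\subseteq\sqrt{U,aAb}$. Unfolding the meaning of $xAx$, this says precisely that $xzx\in\sqrt{U,aAb}$ for every $z\in A$.

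But this is exactly the hypothesis of the constructor $\semiprime_x$ for membership in $\sqrt{U,aAb}$. Hence a single application of $\semiprime_x$ gives $x\in\sqrt{U,aAb}$, and since $x$ was arbitrary the inclusion $\sqrt{U,a}\cap\sqrt{U,b}\subseteq\sqrt{U,aAb}$ follows.

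As for the main obstacle, there is essentially none once \cref{subdirect-representation-constructive2} is in hand: the only idea is the substitution $y=x$, which converts the two-sided conclusion $xAy\subseteq\sqrt{U,aAb}$ into the self-sandwiched family $xAx\subseteq\sqrt{U,aAb}$ that the semiprime constructor is tailored to consume. All of the real work has been front-loaded into the theorem, exactly as the $\red_x$ step does its job silently inside \cref{corollary-subdirect-representation}.
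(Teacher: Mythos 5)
Your proposal is correct and is precisely the intended deduction: the paper omits the proof of this corollary, but the evident argument is to set $y:=x$ in \cref{subdirect-representation-constructive2} to obtain $xAx\subseteq\sqrt{U,aAb}$ and then close with the $\semiprime_x$ constructor, exactly as you do (mirroring how $\red_x$ yields \cref{corollary-subdirect-representation}). No gaps.
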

\section*{Acknowledgements}
I would like to express my deepest gratitude to my supervisor, Ryu Hasegawa, for his support.
I would like to thank Daniel Misselbeck-Wessel for his helpful advice.

This research was supported by Forefront Physics and Mathematics Program to Drive Transformation (FoPM), a World-leading Innovative Graduate Study (WINGS) Program, the University of Tokyo.
\printbibliography
\end{document}